\theoremstyle{plain}
\newtheorem{thm}{Theorem}[section]
\newtheorem{claim}{Claim}
\newtheorem{corollary}[thm]{Corollary}
\newtheorem{lemma}[thm]{Lemma}
\newtheorem{proposition}[thm]{Proposition}
\newtheorem{theorem}[thm]{Theorem}
\newtheorem{op}[thm]{Open Problem}
\numberwithin{equation}{section}
\newcommand{\N}{\mathbb{N}}
\newcommand{\R}{\mathbb{R}}
\begin{document}

\title[Global approximation of convex functions on Banach spaces]{Global approximation of convex functions by differentiable convex functions on Banach spaces}
\author{Daniel Azagra}
\address{ICMAT (CSIC-UAM-UC3-UCM), Departamento de An{\'a}lisis Matem{\'a}tico,
Facultad Ciencias Matem{\'a}ticas, Universidad Complutense, 28040, Madrid, Spain }
\email{azagra@mat.ucm.es}

\author{Carlos Mudarra}
\address{ICMAT (CSIC-UAM-UC3-UCM), Calle Nicol\'as Cabrera 13-15, Campus de Cantoblanco, 28049 Madrid, Spain}
\email{carlos.mudarra@icmat.es}

\date{October 29, 2014}

\keywords{approximation, convex function, differentiable function, Banach space}

\thanks{C. Mudarra was supported by Programa Internacional de Doctorado de la Fundaci\'on La Caixa--Severo Ochoa. Both authors were partially supported by Spanish Ministry Grant MTM2012-3431}

\subjclass{46B20,  52A99, 26B25, 41A30}

\begin{abstract}
We show that if $X$ is a Banach space whose dual $X^{*}$ has an equivalent locally uniformly rotund (LUR) norm, then for every open convex $U\subseteq X$, for every $\varepsilon >0$, and for every continuous and convex function $f:U \rightarrow \R$ (not necessarily bounded on bounded sets) there exists a convex function $g:X \rightarrow \R$ of class $C^1(U)$ such that $f-\varepsilon\leq g\leq f$ on $U.$ We also show how the problem of global approximation of {\em continuous} (not necessarily bounded on bounded sets) and convex functions by $C^k$ smooth convex functions can be reduced to the problem of global approximation of {\em Lipschitz} convex functions by $C^k$ smooth convex functions.
\end{abstract}

\maketitle

\section{Introduction and main results}

It is no doubt useful to be able to approximate convex functions by smooth convex functions. In $\R^{n}$, standard techniques (integral convolutions with mollifiers) enable us to approximate a given convex function by $C^{\infty}$ convex functions, uniformly on compact sets. If a given convex function $f:U\subseteq\R^{n}\to\R$ is not Lipschitz and one desires to approximate $f$ by smooth convex functions uniformly on the domain $U$ of $f$, then one has to work harder as, in absence of {\em strong convexity} of $f$, partitions of unity cannot be used to glue local approximations into a global one without destroying convexity. In a recent paper \cite{A} the first-named author devised a gluing procedure that permits to show that global approximation of (not necessarily Lipschitz of strongly) convex functions by smooth (or even real analytic) convex functions is indeed feasible.
We also refer to \cite{GreeneWu} and \cite{Smith} for information about this problem in the setting of finite-dimensional Riemannian manifolds, and to \cite{AF1, AF2} for the case of infinite-dimensional Riemannian manifolds.

In this note we will consider the question whether or not global approximation of continuous convex functions can be performed in Banach spaces. Let us briefly review the main techniques available in this setting for approximating convex functions by smooth convex functions. On the one hand, there are very fine results of Deville, Fonf, H\'ajek and Talponen \cite{DFH1, DFH2, HT} showing that if $X$ is the Hilbert space (or more generally a separable Banach space with a $C^m$ equivalent norm) then every bounded closed convex body in $X$ can be approximated by real-analytic (resp. $C^m$ smooth) convex bodies.
Via the implicit function theorem this yields that for every convex function $f:X\to\R$ which is bounded on bounded sets, for every $\varepsilon>0$, and for every bounded set $B\subset X$, there exists a $C^m$ smooth convex function $g:B\to\R$ such that $|f-g|\leq\varepsilon $ on $B$. Unfortunately these approximations $g$ are only defined on a bounded subset of $X$, so they cannot be used along with \cite[Theorem 1.2]{A} to solve the global approximation problem we are concerned with.

On the other hand, if $f:X\to\R$ is convex and Lipschitz and the dual space $X^{*}$ is LUR (we refer the reader to \cite{DGZ, FabianEtAl} for any unexplained terms in Banach space theory), then it is well known that the Moreau-Yosida regularizations $f_{\lambda}(x)=\inf_{y\in X}\{f(y)+\frac{1}{2\lambda}\|x-y\|^{2}\}$ are $C^1$ smooth and convex, and approximate $f$ uniformly on $X$ as $\lambda\to 0^{+}$ (see the proof of Theorem \ref{primer teorema} below). If $f$ is not Lipschitz but it is bounded on bounded subsets of $X$, then the $f_{\lambda}$ approximate $f$ uniformly on bounded subsets of $X$. And, if $f$ is only continuous, then the convergence of the $f_{\lambda}$ to $f$ is uniform only on compact subsets of $X$. By combining the gluing technique of \cite[Theorem 1.2]{A} with these results, one can deduce that every convex function which is bounded on bounded subsets of $X$ can be approximated by $C^1$ convex functions, uniformly on $X$; see \cite[Corollary 1.5]{A}.

However, as shown in \cite[Theorem 2.2]{BorFitzVan} or \cite[Theorem 8.2.2]{BorVan}, for every infinite-dimensional Banach space $X$ there exist continuous convex functions defined on all of $X$ which are not bounded on bounded sets of $X$. There are plenty of such examples, and they can be taken to be either smooth or nonsmooth. For instance, if $X=\ell_2$, the function $f(x)=\sum_{n=1}^{\infty}|x_{n}|^{2n}$ is real-analytic on $X$, but is not bounded on the ball of center $0$ and radius $2$ in $X$. On the other hand, if $\varphi:[0,\infty)\to [0, \infty)$ is a convex function such that $t\leq\varphi(t)\leq 2t$ and $\varphi$ is not differentiable at any rational number, then it is not difficult to see that the function $g(x)=\sum_{n=1}^{\infty}\varphi(|x_n|)^{2n}$ is continuous and convex on $X$, is not bounded on $B(0,2)$, and the set $\{x\in X \, : \, g \textrm{ is not differentiable at } x\}$ is dense.

In view of these remarks, even in the case when $X$ is the separable Hilbert space, the following result is new.

\begin{theorem}\label{primer teorema}
Let $U$ be an open convex subset of a Banach space $X$ such that $X^{*}$ has an equivalent LUR norm. Then, for every $\varepsilon >0$ and every continuous and convex function $f:U \rightarrow \R,$ there exists a convex function $g:U \rightarrow \R$ of class $C^1(U)$ such that $f-\varepsilon\leq g\leq f$ on $U.$
\end{theorem}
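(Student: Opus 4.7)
The plan is to combine the Moreau--Yosida regularization, which handles Lipschitz convex functions well, with a reduction that brings the general continuous case down to the Lipschitz one, along the lines announced in the abstract.

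For the Lipschitz case, suppose $h : X \to \mathbb{R}$ is convex and $L$-Lipschitz. Because $X^{*}$ has an equivalent LUR norm, $X$ admits an equivalent norm that is Fréchet differentiable off $0$; I would adopt this norm throughout. The Moreau--Yosida regularization
\[
h_{\lambda}(x) = \inf_{y \in X}\Bigl\{h(y) + \tfrac{1}{2\lambda}\|x-y\|^{2}\Bigr\}
\]
is convex, satisfies $h_{\lambda} \le h$, and obeys the uniform bound $0 \le h - h_{\lambda} \le \lambda L^{2}/2$, obtained by completing the square in $h(y) \ge h(x) - L\|x-y\|$. Fréchet smoothness of the norm squared, together with the fact that the infimum is attained on a bounded set, yields $h_{\lambda} \in C^{1}(X)$. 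Choosing $\lambda$ small then gives $h - \varepsilon \le h_{\lambda} \le h$ uniformly.

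For a general continuous convex $f : U \to \mathbb{R}$, I would construct an exhaustion of $U$ by open convex sets $V_{1} \subset \overline{V_{1}} \subset V_{2} \subset \cdots$ with $\bigcup_{n} V_{n} = U$, chosen from truncated sublevel sets of $f$ so that $f$ is Lipschitz on an open neighborhood of each $\overline{V_{n}}$. On each such neighborhood I would extend $f|_{V_{n}}$ to a globally Lipschitz convex function $h_{n} : X \to \mathbb{R}$ (for instance by an infimal convolution of the form $h_{n}(x) = \inf_{y}\{f(y) + L_{n}\|x-y\|\}$ with a suitable $L_{n}$ and $y$ ranging over a set where $f$ is controlled), arranged so that $h_{n} \le f$ on $U$ and $h_{n} = f$ on $V_{n}$. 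Applying the Lipschitz case to each $h_{n}$ with tolerance $\varepsilon/2^{n+1}$, I would then assemble the resulting $C^{1}$ convex approximations into a single $C^{1}$ convex function $g$ with $f - \varepsilon \le g \le f$ on $U$ via the gluing procedure of \cite[Theorem 1.2]{A}.

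The principal obstacle is the reduction step. In infinite-dimensional spaces continuous convex functions need not be bounded on bounded sets (as the examples $\sum |x_{n}|^{2n}$ and $\sum \varphi(|x_{n}|)^{2n}$ on $\ell_{2}$ described in the introduction show), so the exhausting $V_{n}$'s cannot simply be balls: they must be truncated sublevel sets, and one must verify that $f$ is genuinely Lipschitz on a full open neighborhood of each $\overline{V_{n}}$, using the classical fact that a convex function bounded above on a ball is Lipschitz on any strictly smaller concentric ball. One must also check that the inf-convolution extensions $h_{n}$ remain convex and satisfy $h_{n} \le f$ on $U$, and finally that the gluing of \cite[Theorem 1.2]{A} yields a globally $C^{1}$ convex approximation when fed these local pieces with the telescoping tolerances $\varepsilon/2^{n+1}$.
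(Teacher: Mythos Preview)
Your overall strategy is the paper's: reduce to the Lipschitz case via a linear-penalty infimal convolution, handle that case by Moreau--Yosida regularization, and glue the resulting approximations as in \cite{A}. The gap is in your choice of exhaustion. Truncated sublevel sets do not in general give sets on which $f$ is Lipschitz. Take the paper's own example $f(x)=\sum_{n\ge 1}|x_n|^{2n}$ on $\ell_2$: every standard basis vector $e_k$ lies in the closure of $V=\{f<1\}\cap B(0,1)$ (since $(1-\delta)e_k\in V$ for all $\delta\in(0,1)$), yet $\|\nabla f(e_k)\|=2k$. Hence $f$ is not Lipschitz on any neighborhood of $\overline{V}$, and the same obstruction persists for every larger truncated sublevel set containing $\overline V$. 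The fact you invoke, that a convex function bounded above on a ball is Lipschitz on a smaller concentric ball, yields only \emph{local} Lipschitz bounds; since $\overline{V_n}$ is noncompact in infinite dimensions you cannot upgrade this to a uniform constant.

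The paper sidesteps this by taking $E_n=\{x\in U: f\text{ is }n\text{-Lipschitz on a neighborhood of }x\}$. These sets are open, increasing, and cover $U$, but are typically not convex and need not satisfy $\overline{E_n}\subset E_{n+1}$; neither property is used. The key observation is that the linear inf-convolution $f_n(x)=\inf_{y\in U}\{f(y)+n\|x-y\|\}$ automatically agrees with $f$ on all of $E_n$ (because a local minimum of the convex function $y\mapsto f(y)+n\|x-y\|$ at $y=x$ is global), and the smooth-maximum gluing is then arranged with geometrically shrinking tolerances so that $g_n=g_{n-1}$ on $E_{n-1}$, producing a locally eventually constant limit. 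Your plan becomes correct once the $V_n$ are replaced by these $E_n$ and the unnecessary requirement $\overline{V_n}\subset V_{n+1}$ is dropped.
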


This will be proved by combining the above mentioned result on the Moreau-Yosida regularization of a convex function with the following refinement of \cite[Theorem 1.2]{A} which tells us that, in general, the problem of global approximation of {\em continuous} convex functions by $C^m$ smooth convex functions can be reduced to the problem of global approximation of {\em Lipschitz} convex functions by $C^m$ smooth convex functions.

\begin{theorem}\label{segundo teorema}
Let $X$ be a Banach space with the following property: every Lipschitz convex function on $X$ can be approximated by convex functions of class $C^m$, uniformly on $X.$ Then, for every $U\subseteq X$ open and convex, every continuous convex function on $U$ can be approximated by $C^m$ convex functions, uniformly on $U$.
\end{theorem}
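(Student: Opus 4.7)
The plan is to reduce the result to the hypothesis on each member of a well-chosen exhaustion of $U$ and then glue using the technique from \cite[Theorem 1.2]{A}. The four steps are: (i) exhaust $U$ by bounded open convex sets $V_n$ on which $f$ is Lipschitz; (ii) on each $V_n$, extend $f$ to a globally Lipschitz convex function $F_n:X\to\R$; (iii) invoke the hypothesis to approximate each $F_n$ by a $C^m$ convex function $G_n$ on $X$; (iv) glue the $G_n$ together to obtain a $C^m$ convex function on $U$ uniformly close to $f$.

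For (i), I would intersect the ``inner tube'' $\{x\in U:\mathrm{dist}(x,X\setminus U)>1/n\}$ (convex, since the inner distance to $X\setminus U$ is concave on $U$) with $\{\|x\|<n\}$ and with the sublevel set $\{f<n\}$, yielding an increasing family of bounded open convex sets with $\overline{V}_n\subset V_{n+1}$ and $\bigcup_n V_n=U$. A continuous convex function bounded on an open convex set is Lipschitz on any strictly interior subset, so $f$ is Lipschitz on $\overline{V}_n$ with some constant $L_n$.

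For (ii), I would set
\[
F_n(x)=\sup\{\ell(x):\ell\text{ affine on }X,\ \ell\leq f\text{ on }V_n,\ \text{slope of }\ell\leq L_n\}.
\]
Because $f$ is convex and $L_n$-Lipschitz on $V_n$, each point of $V_n$ admits a supporting affine functional of slope $\leq L_n$ lying below $f$ on $V_n$, hence $F_n\equiv f$ on $V_n$; and as a supremum of $L_n$-Lipschitz affine functions, $F_n$ is convex and $L_n$-Lipschitz on $X$. Step (iii) is then immediate from the standing hypothesis: for any prescribed $\eta_n>0$ there is a convex $G_n\in C^m(X)$ with $\sup_X|G_n-F_n|\leq\eta_n$.

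The main obstacle is step (iv). Following the recipe in \cite[Theorem 1.2]{A}, I would choose positive sequences $\alpha_n$ and $\eta_n$ and form
\[
g(x)=\mathcal{M}\bigl(G_1(x)-\alpha_1,\,G_2(x)-\alpha_2,\,\ldots\bigr),
\]
where $\mathcal{M}$ is a smoothed convex maximum, calibrated so that on each $V_n$ the term $G_n-\alpha_n$ dominates while earlier shifted terms $G_m-\alpha_m$ (with $m<n$) are pushed well below $f$ and later terms $G_k-\alpha_k$ (with $k>n$) stay close to $f$ from below. The delicate point is that although $F_n=f$ on $V_n$, outside $V_n$ the Lipschitz extension $F_m$ (for $m<n$) can exceed $f$ freely, so the shifts $\alpha_m$ must be large enough to compensate while remaining small on $V_m$ itself; one also has to ensure that locally only finitely many terms contribute to $\mathcal{M}$, so that $g$ is well-defined, of class $C^m$, convex, and satisfies $f-\varepsilon\leq g\leq f$ on $U$. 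This bookkeeping is precisely what is carried out in \cite[Theorem 1.2]{A}; the only novelty here is that our approximations $G_n$ are furnished by a \emph{Lipschitz} hypothesis rather than by a bounded-on-bounded-sets hypothesis, and an inspection of the gluing in \cite{A} shows that this weaker input suffices, since only the values and Lipschitz constants of $F_n$ on and near $V_n$ enter the estimates.
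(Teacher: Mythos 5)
Your overall plan (exhaust $U$, replace $f$ by a Lipschitz convex function agreeing with $f$ on each exhausting piece, approximate that by a $C^m$ convex function using the hypothesis, then glue with smooth maxima as in \cite[Theorem 1.2]{A}) is the same strategy as the paper's, but there is a genuine gap in step (ii) that propagates fatally into the gluing.

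The issue is that your extension
\[
F_n(x)=\sup\{\ell(x):\ell\text{ affine on }X,\ \ell\leq f\text{ on }V_n,\ \text{slope of }\ell\leq L_n\}
\]
is \emph{not} a minorant of $f$ on $U$. An affine $\ell$ lying below $f$ only on $V_n$ may exceed $f$ outside $V_n$, and the excess $F_n-f$ can be \emph{unbounded}: for instance with $U=\R$, $f(x)=\max\{0,x-1\}$, $V_n=(0,1/2)$, the affine $\ell(x)=-x$ satisfies $\ell\leq f$ on $V_n$ with slope $1\leq L_n$, yet $\ell(-c)-f(-c)=c\to\infty$. You acknowledge this (``outside $V_n$ the Lipschitz extension $F_m$ can exceed $f$ freely, so the shifts $\alpha_m$ must be large enough to compensate'') but no constant shift $\alpha_m$ can compensate for an unbounded excess while ``remaining small on $V_m$.'' Since any smoothed convex maximum $\mathcal{M}$ dominates the ordinary maximum, $g\geq G_m-\alpha_m$ everywhere, so $g-f$ would be unbounded and the estimate $g\leq f$ (needed both for the conclusion and for the local stabilization/finiteness of the construction) fails. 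Your closing claim that ``only the values and Lipschitz constants of $F_n$ on and near $V_n$ enter the estimates'' is therefore not correct: the global upper bound $F_n\leq f$ on all of $U$ is essential to the gluing.

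The fix is small but essential: take the supremum only over \emph{supporting} affine functionals of $f$ at points of $V_n$ (equivalently, use the infimal convolution $f_n(x)=\inf_{y\in U}\{f(y)+n\|x-y\|\}$ as the paper does). These are automatically $\leq f$ on all of $U$ by convexity, have slope $\leq L_n$ when the support point lies in $V_n$, and still give $F_n=f$ on $V_n$. This is exactly the role played by the paper's $f_n$, which the paper pairs with the sets $E_n=\{x\in U:f\text{ is $n$-Lipschitz near }x\}$ rather than your bounded convex exhaustion $V_n$; both exhaustions are workable, but the property $f_n\leq f$ on $U$ (Claim \ref{primer claim}(i)) is what makes the iterative smooth maximum $g_n=M_{\varepsilon/10^n}(g_{n-1},h_n)$ stabilize and remain bounded above by $f$.
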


From Theorem \ref{primer teorema} we will also deduce the following characterization of the class of separable Banach spaces for which the problem of global approximation of continuous convex functions by $C^1$ convex functions has a positive solution.

\begin{corollary}\label{corolario}
For a separable Banach space $X,$ the following statements are equivalent:
\begin{itemize}
\item[(i)] $X^*$ is separable.
\item[(ii)] For every $U\subseteq X$ open and convex, every continuous convex function $f:U \rightarrow \R$ and every $\varepsilon >0,$ there exists $g: X \rightarrow \R$ of class $C^1(U)$ and convex such that $f-\varepsilon \leq g \leq f$ on $U.$
\end{itemize}
\end{corollary}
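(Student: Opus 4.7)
The plan is to treat the two implications separately, in each case reducing to a classical renorming-theoretic fact. For (i) $\Rightarrow$ (ii), Kadec's theorem tells us that every separable Banach space admits an equivalent locally uniformly rotund norm, so in particular $X^*$ does; Theorem~\ref{primer teorema} then supplies the desired $C^1$ convex approximant of $f$ on $U$.

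For the converse (ii) $\Rightarrow$ (i), my strategy is to produce an equivalent $C^1$-smooth norm on $X$ and then appeal to the Ekeland--Lebourg theorem. I would first apply (ii) to the continuous convex function $f(x):=\|x\|$ on $U=X$ with, say, $\varepsilon = 1/2$, obtaining a convex $g:X\to\R$ of class $C^1$ with $\|x\|-1/2\leq g(x)\leq\|x\|$. Symmetrizing $G(x):=\tfrac{1}{2}(g(x)+g(-x))$ preserves convexity, $C^1$-smoothness and the two-sided estimate, and additionally forces $G(-x)=G(x)$. For any $M>G(0)$ (for instance $M=1$, since $G(0)\leq 0$), the sublevel set $C:=\{x\in X:G(x)\leq M\}$ is then a closed, convex, symmetric body bounded in $\|\cdot\|$ (because $\|x\|\leq G(x)+1/2\leq M+1/2$ on $C$) with $0$ in its interior, and its Minkowski functional $p_C$ is an equivalent norm on $X$.

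To see $p_C$ is of class $C^1$ on $X\setminus\{0\}$, note that for any $z$ with $G(z)=M$ convexity gives $G(0)\geq G(z)+G'(z)(-z)$, i.e.\ $G'(z)(z)\geq M-G(0)>0$; in particular $G'(z)\neq 0$ on the level set $\{G=M\}$. The implicit function theorem applied to $\varphi(x,t):=G(x/t)-M$, whose partial $\partial_t\varphi(x,p_C(x))=-G'(x/p_C(x))(x)/p_C(x)^2$ is nonzero, then shows that $t=p_C(x)>0$ depends in a $C^1$ way on $x$. Thus $X$ has a $C^1$-smooth equivalent norm, which via a smooth cut-off immediately gives a Fr\'echet $C^1$-smooth bump; by the Ekeland--Lebourg theorem this forces $X$ to be Asplund, and since $X$ is separable this is equivalent to $X^*$ being separable. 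The only step calling for care is the non-vanishing of $G'$ on $\{G=M\}$, but as observed this follows at once from convexity together with the strict inequality $G(0)<M$, so I do not anticipate any serious obstacle.
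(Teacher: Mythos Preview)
Your argument is correct, and for $(i)\Rightarrow(ii)$ it is essentially the paper's: obtain an LUR dual norm from separability of $X^*$ and apply Theorem~\ref{primer teorema}.

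For $(ii)\Rightarrow(i)$ both you and the paper ultimately produce a $C^1$ bump on $X$ and then invoke the standard fact that a separable space admitting such a bump has separable dual. The difference is in how the bump is built. The paper takes the direct route: once one has a convex $\varphi\in C^1(X)$ with $\|x\|-\tfrac14\le\varphi(x)\le\|x\|$, one simply composes $\varphi$ with a fixed $C^1$ function $h:\R\to\R$ satisfying $h\equiv 1$ on $(-\infty,0]$ and $h\equiv 0$ on $[3/4,\infty)$; then $\psi=h\circ\varphi$ is a $C^1$ bump supported in the unit ball. Your construction instead symmetrizes, passes to a sublevel set, takes its Minkowski functional, and checks via the implicit function theorem that this yields an equivalent $C^1$ norm before finally producing the bump. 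This works (your verification that $G'(z)z\ge M-G(0)>0$ on $\{G=M\}$ is exactly what is needed for the implicit function theorem), and it gives the bonus of an equivalent $C^1$ norm, but it is considerably more labor than required: the detour through the norm is unnecessary since the bump can be read off directly from the convex approximant.
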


In order to know whether or not similar results are true for higher order smoothness classes, and in view of Theorem \ref{segundo teorema} above, one would only need to solve the following problem.

\begin{op}
Let $X$ be a Hilbert space (or in general a Banach space possessing an equivalent  norm of class $C^m$), $f:X \rightarrow \R$ a Lipschitz and convex function, and $\varepsilon>0.$ Does there exist $\varphi : X \rightarrow \R$ of class $C^\infty$ (resp. $C^m$) and convex such that $|f-\varphi| \leq \varepsilon$ on $X$?
\end{op}

As a matter of fact, by combining Theorem \ref{segundo teorema} and the proof of \cite[Theorem 1.2]{A}, it would also be enough to solve the following.

\begin{op}
Let $X$ be a Hilbert space (or in general a Banach space possessing an equivalent  norm of class $C^m$), $f:X \rightarrow \R$ a Lipschitz and convex function, $B$ a bounded convex subset of $X$, and $\varepsilon>0.$ Does there exist $g : X \rightarrow \R$ of class $C^\infty$ (resp. $C^m$) and convex such that $g\leq f$ on $X$, and
 $f-\varepsilon\leq g$ on $B$?
\end{op}

\section{Proofs}

Let us first recall a couple of tools from \cite{A}.
\begin{lemma}[Smooth maxima]\label{smooth maxima}
For every $\varepsilon>0$ there exists a $C^\infty$ function $M_{\varepsilon}:\R^{2}\to\R$ with the following properties:
\begin{enumerate}
\item $M_{\varepsilon}$ is convex;
\item $\max\{x, y\}\leq M_{\varepsilon}(x, y)\leq \max\{x,y\}+\frac{\varepsilon}{2}$ for all $(x,y)\in\R^2$.
\item $M_{\varepsilon}(x,y)=\max\{x,y\}$ whenever $|x-y|\geq\varepsilon$.
\item $M_{\varepsilon}(x,y)=M_{\varepsilon}(y,x)$.
\item $\textrm{Lip}(M_{\varepsilon})=1$ with respect to the norm $\|\cdot\|_{\infty}$ in $\R^2$.
\item $y-\varepsilon\leq x<x'\implies M_{\varepsilon}(x,y)<M_{\varepsilon}(x',y)$.
\item $x-\varepsilon\leq y<y'\implies M_{\varepsilon}(x,y)<M_{\varepsilon}(x,y')$.
\item $x\leq x', y\leq y' \implies M_{\varepsilon}(x,y)\leq M_{\varepsilon}(x', y')$, with a strict inequality in the case when both $x<x'$ and $y<y'$.
\end{enumerate}
\end{lemma}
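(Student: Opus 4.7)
The plan is to reduce the construction to smoothing the absolute value function, using the identity $\max\{x,y\}=\frac{x+y}{2}+\frac{1}{2}|x-y|$. I would set
\[
M_\varepsilon(x,y)\,:=\,\frac{x+y}{2}+\frac{1}{2}\theta(x-y),
\]
where $\theta:\R\to\R$ is a $C^\infty$ even convex approximation of $|\cdot|$ built by convolution: fix a nonnegative even $C^\infty$ bump $\rho:\R\to\R$ with $\sop(\rho)\subseteq[-\varepsilon,\varepsilon]$, $\rho>0$ on $(-\varepsilon,\varepsilon)$, and $\int\rho=1$, and put $\theta:=|\cdot|\ast\rho$. Standard convolution estimates give that $\theta$ is $C^\infty$, convex (with $\theta''=2\rho$, hence strictly convex on $(-\varepsilon,\varepsilon)$), satisfies $\theta(t)=|t|$ for $|t|\geq\varepsilon$ (because then $t-s$ keeps constant sign on $\sop(\rho)$ and $\int s\rho(s)\,ds=0$ by evenness), and obeys $|t|\leq\theta(t)\leq|t|+\varepsilon$ (by Jensen and the triangle inequality, using $|s|\leq\varepsilon$ on $\sop(\rho)$).

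Once $\theta$ is in hand, properties (1)--(4) of the lemma follow at once: (1) because $M_\varepsilon$ is an affine function plus a convex function composed with a linear map; (2) from $0\leq\theta(t)-|t|\leq\varepsilon$; (3) from $\theta=|\cdot|$ outside $(-\varepsilon,\varepsilon)$; and (4) from the evenness of $\theta$. For the remaining claims I would work with the explicit partial derivatives
\[
\partial_x M_\varepsilon(x,y)=\tfrac{1}{2}\bigl(1+\theta'(x-y)\bigr),\qquad \partial_y M_\varepsilon(x,y)=\tfrac{1}{2}\bigl(1-\theta'(x-y)\bigr).
\]
Since $\theta'$ is nondecreasing with values in $[-1,1]$, equal to $\pm 1$ outside $(-\varepsilon,\varepsilon)$ and strictly increasing on it, both partial derivatives are nonnegative and their sum equals $1$ identically. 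This makes $M_\varepsilon$ monotone in each variable and gives $\textrm{Lip}_{\|\cdot\|_\infty}(M_\varepsilon)=\sup\|\nabla M_\varepsilon\|_1=1$, yielding (5) and the non-strict part of (8).

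The delicate step is the strict monotonicity in (6), (7), and the strict half of (8), because $\partial_x M_\varepsilon(s,y)=0$ precisely when $s\leq y-\varepsilon$ and $\partial_y M_\varepsilon(x,t)=0$ precisely when $t\leq x-\varepsilon$. For (6), the hypothesis $x\geq y-\varepsilon$ forces $s>y-\varepsilon$ for every $s\in(x,x']$, so the integrand $\partial_x M_\varepsilon(\cdot,y)$ is strictly positive there (this uses $\rho>0$ on $(-\varepsilon,\varepsilon)$, which yields $\theta'(t)>-1$ for all $t>-\varepsilon$), and hence $\int_x^{x'}\partial_x M_\varepsilon(s,y)\,ds>0$; (7) is symmetric. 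For the strict part of (8), with $x<x'$ and $y<y'$, I would write
\[
M_\varepsilon(x',y')-M_\varepsilon(x,y)=\int_x^{x'}\partial_x M_\varepsilon(s,y)\,ds+\int_y^{y'}\partial_y M_\varepsilon(x',t)\,dt,
\]
and note that the first integral vanishes only if $x'\leq y-\varepsilon$ and the second only if $y'\leq x'-\varepsilon$; combined, these would force $y'\leq y-2\varepsilon<y$, contradicting $y<y'$. Hence at least one integral is strictly positive.

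The main obstacle I foresee is simply guaranteeing that the mollifier $\rho$ is strictly positive on all of $(-\varepsilon,\varepsilon)$, so that $\theta'$ is strictly increasing there and the boundary cases in (6)--(8) behave as described. This is handled by the standard bump $\rho(s)=C\exp\!\bigl(-1/(\varepsilon^2-s^2)\bigr)$ for $|s|<\varepsilon$ and $\rho(s)=0$ otherwise, with $C$ chosen so that $\int\rho=1$; this $\rho$ is even, smooth on $\R$, and strictly positive on $(-\varepsilon,\varepsilon)$, which is exactly what the argument requires.
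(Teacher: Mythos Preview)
Your proof is correct and follows essentially the same approach as the paper: both define $M_\varepsilon(x,y)=\tfrac{1}{2}(x+y+\theta(x-y))$ for a smooth, even, convex $1$-Lipschitz function $\theta$ that coincides with $|\cdot|$ outside $(-\varepsilon,\varepsilon)$, and the paper refers to \cite{A} for the routine verifications. Your contribution is to make the construction of $\theta$ explicit via mollification $\theta=|\cdot|\ast\rho$ and to carry out the checks of (5)--(8) in detail using $\theta''=2\rho>0$ on $(-\varepsilon,\varepsilon)$; these details are exactly what the paper omits.
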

We call $M_{\varepsilon}$ a smooth maximum. In order to prove this Lemma, one first constructs a $C^{\infty}$ function $\theta:\R\to (0,
\infty)$ such that:
\begin{enumerate}
\item $\theta(t)=|t|$ if and only if $|t|\geq\varepsilon$;
\item $\theta$ is convex and symmetric;
\item $\textrm{Lip}(\theta)=1$,
\end{enumerate}
and then one puts
$$
M_{\varepsilon}(x,y)=\frac{x+y+\theta(x-y)}{2}.
$$
See \cite[Lemma 2.1]{A} for details.

Let us also restate Proposition 2.2 from \cite{A}.
\begin{proposition}\label{properties of M(f,g)}
Let $U$ be an open convex subset of $X$,
$M_{\varepsilon}$ as in the preceding Lemma, and let $f, g: U\to\R$
be convex functions. For every $\varepsilon>0$, the function
$M_{\varepsilon}(f,g):U\to\R$ has the following properties:
\begin{enumerate}
\item $M_{\varepsilon}(f,g)$ is convex.
\item If $f$ is $C^m$ on $\{x: f(x)\geq g(x)-\varepsilon\}$ and $g$ is $C^m$ on $\{x: g(x)\geq f(x)-\varepsilon\}$ then $M_{\varepsilon}(f,g)$ is $C^m$ on $U$. In particular, if $f, g$ are $C^m$, then so is $M_{\varepsilon}(f,g)$.
\item $M_{\varepsilon}(f,g)=f$ if $f\geq g+\varepsilon$.
\item $M_{\varepsilon}(f,g)=g$ if $g\geq f+\varepsilon$.
\item $\max\{f,g\}\leq M_{\varepsilon}(f,g)\leq \max\{f,g\} + \varepsilon/2$.
\item $M_{\varepsilon}(f,g)=M_{\varepsilon}(g, f)$.
\item $\textrm{Lip}(M_{\varepsilon}(f,g)_{|_B})\leq \max\{ \textrm{Lip}(f_{|_B}), \textrm{Lip}(g_{|_B}) \}$ for every ball $B\subset U$ (in particular $M_{\varepsilon}(f,g)$ preserves common local Lipschitz constants of $f$ and $g$).
\item If $f, g$ are strictly convex on a set $B\subseteq U$, then so is $M_{\varepsilon}(f,g)$.
\item If $f_1\leq f_2$ and $g_1\leq g_2$ then $M_{\varepsilon}(f_1, g_1)\leq M_{\varepsilon}(f_2, g_2)$.
\end{enumerate}
\end{proposition}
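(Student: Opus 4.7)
The plan is to reduce each of the nine claims about $M_\varepsilon(f,g)$ to the corresponding scalar property of the smooth maximum $M_\varepsilon:\R^2\to\R$ supplied by Lemma \ref{smooth maxima}, combined with convexity and continuity of $f$ and $g$. Six of the nine statements are essentially pointwise translations: (3) and (4) read off from Lemma \ref{smooth maxima}(3) applied to $(f(u),g(u))$; (5) is Lemma \ref{smooth maxima}(2); (6) is Lemma \ref{smooth maxima}(4); and (9) follows from Lemma \ref{smooth maxima}(8). Property (7) follows from $\textrm{Lip}(M_\varepsilon)=1$ with respect to $\|\cdot\|_\infty$, so that $|M_\varepsilon(f,g)(u)-M_\varepsilon(f,g)(v)|\leq\max\{|f(u)-f(v)|,|g(u)-g(v)|\}\leq\max\{\textrm{Lip}(f|_B),\textrm{Lip}(g|_B)\}\|u-v\|$ for $u,v$ in a ball $B$.

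For convexity (1), the crucial feature is that $M_\varepsilon$ is both convex and coordinatewise nondecreasing. A short chain of inequalities does it: convexity of $f$ and $g$ places $(f(tu+(1-t)v),g(tu+(1-t)v))$ below $(tf(u)+(1-t)f(v),tg(u)+(1-t)g(v))$ coordinatewise; monotonicity of $M_\varepsilon$ preserves this inequality when $M_\varepsilon$ is applied; finally convexity of $M_\varepsilon$ splits the right-hand side. Property (8) is obtained by the same argument with the strict monotonicity furnished by Lemma \ref{smooth maxima}(6)--(8).

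The main obstacle is the smoothness statement (2), where one must verify $C^m$ regularity across the curves on which the formula for $M_\varepsilon(f,g)$ changes. The plan is a case split at each $x_0\in U$. If $f(x_0)-g(x_0)>\varepsilon$, continuity of $f-g$ provides a neighborhood $V$ of $x_0$ on which $f\geq g+\varepsilon$; there $M_\varepsilon(f,g)=f$ by (3), and $f$ is $C^m$ on $V\subset\{f\geq g-\varepsilon\}$ by the first hypothesis. The case $g(x_0)-f(x_0)>\varepsilon$ is symmetric. The delicate case is $|f(x_0)-g(x_0)|\leq\varepsilon$: here $x_0$ belongs to both closed sets $\{f\geq g-\varepsilon\}$ and $\{g\geq f-\varepsilon\}$, so by the two hypotheses of (2), \emph{both} $f$ and $g$ are $C^m$ on a single neighborhood of $x_0$, and $M_\varepsilon(f,g)$ is $C^m$ there by composition with the $C^\infty$ function $M_\varepsilon$. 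The crucial observation, and the reason the hypothesis of (2) is phrased with closed half-strips rather than open ones, is precisely that on the overlap region $\{|f-g|\leq\varepsilon\}$ both functions must be simultaneously $C^m$ for the composition argument to apply.
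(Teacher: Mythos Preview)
The paper does not prove this proposition; it is merely restated from \cite{A} without argument. Your proof is correct and is precisely the expected one: each of the nine properties reduces to the corresponding scalar property of $M_\varepsilon$ from Lemma~\ref{smooth maxima}, the key device for (1) and (8) being that $M_\varepsilon$ is convex and coordinatewise nondecreasing (Lemma~\ref{smooth maxima}(8)), and your three-case localization for (2) is exactly the right way to exploit the closed half-strip hypotheses. One small remark: in the case $f(x_0)-g(x_0)>\varepsilon$ of (2) you invoke continuity of $f-g$, which is not explicitly assumed; but under the hypotheses of (2) each of $f,g$ is $C^m$ somewhere on $U$, and a convex function on an open convex subset of a Banach space that is continuous at one point is continuous everywhere on its domain, so the argument goes through.
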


We are now ready to prove our results.

\begin{proof}[Proof of Theorem \ref{segundo teorema}]
Given a continuous convex function $f:U \rightarrow \R$ and $\varepsilon >0,$ we define, for each $n\in\N$,
$$E_n=\{x\in U \, | \,  f \textrm{ is $n$-Lipschitz on an open neighborhood of $x$}\}.$$
It is obvious that $E_n$ is an open subset of $U$ and $E_n \subseteq E_{n+1}.$ Since $f$ is continuous and convex, $f$ is locally Lipschitz and then, for every point $x\in U,$ there is an open set $x\in U_x\subset U$ and a positive integer $n$ for which $f$ is $n$-Lipschitz on $U_x.$ This proves that $U=\bigcup_{n=1}^\infty E_n.$ Now we set
 $$
\begin{array}{rccc}
f_n : X & \longrightarrow & \R  \\
   x & \longmapsto & \inf_{y \in U} \lbrace f(y)+ n\|x-y\| \rbrace,
\end{array} \quad n=1,2,\ldots
$$
\begin{claim}\label{primer claim}
For every $n \geq 1,$ the function $f_n$ has the following properties:
\begin{itemize}
\item[(i)] $f_n \leq f$ on $U.$
\item[(ii)] $f_n$ is $n$-Lipschitz on $X.$
\item[(iii)] $f_n$ is convex on $X.$
\item[(iv)] $f=f_n$ on $E_n.$
\end{itemize}
\end{claim}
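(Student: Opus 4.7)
The plan is to view $f_n$ as an inf-convolution of $f$ with the function $n\|\cdot\|$, i.e., the classical Pasch--Hausdorff envelope, and to verify the four items in the order (i), (ii), (iii), (iv).

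For (i), I would just take $y=x$ in the infimum defining $f_n(x)$ for $x \in U$, getting $f_n(x)\leq f(x)+n\|x-x\| = f(x)$. For (ii), the triangle inequality gives, for every $x_1,x_2 \in X$ and every $y \in U$,
$$f(y) + n\|x_1 - y\| \leq f(y) + n\|x_2 - y\| + n\|x_1 - x_2\|,$$
and taking the infimum over $y \in U$ on both sides, then swapping the roles of $x_1$ and $x_2$, yields $|f_n(x_1) - f_n(x_2)|\leq n\|x_1 - x_2\|$.

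For (iii), I would run the standard $\varepsilon$-optimization argument for inf-convolutions. Given $x_1,x_2 \in X$ and $t \in (0,1)$, for each $\varepsilon>0$ choose $y_1,y_2 \in U$ with $f(y_i) + n\|x_i-y_i\| \leq f_n(x_i)+\varepsilon$. Since $U$ is convex, $y_t := ty_1 + (1-t)y_2$ lies in $U$, and using convexity of $f$ together with the triangle inequality applied to $t(x_1-y_1) + (1-t)(x_2-y_2)$ I get
$$f_n(tx_1 + (1-t)x_2) \leq f(y_t) + n\|tx_1 + (1-t)x_2 - y_t\| \leq tf_n(x_1) + (1-t)f_n(x_2) + \varepsilon,$$
and letting $\varepsilon \to 0^+$ finishes this part. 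For (iv), I already have $f_n \leq f$ on $U$ from (i), so only the reverse inequality on $E_n$ needs work. If $x \in E_n$, then $f$ is $n$-Lipschitz on an open neighborhood of $x$, hence every subgradient $\xi \in \partial f(x)$ satisfies $\|\xi\|_{X^{*}} \leq n$. The subgradient inequality $f(y)\geq f(x)+\xi(y-x)$ (valid for all $y \in U$) then gives
$$f(y) + n\|x-y\| \geq f(x) + \xi(y-x) + n\|x-y\| \geq f(x) + (n-\|\xi\|)\|y-x\| \geq f(x),$$
and taking the infimum over $y \in U$ yields $f_n(x) \geq f(x)$, completing (iv).

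The only genuine subtlety I see is to verify that $f_n$ is real-valued and not merely $[-\infty,+\infty)$-valued, which is tacit in (ii) and (iii); for an unbounded-below convex $f$ one might fear $f_n \equiv -\infty$ if $f$ admitted subgradients of norm exceeding $n$. However, in the situation where $f_n$ is actually used (namely when $E_n \neq \emptyset$), fixing any $x_0 \in E_n$ and any $\xi \in \partial f(x_0)$ gives $\|\xi\|\leq n$, and the same subgradient computation as in (iv) yields
$$f(y) + n\|x-y\| \geq f(x_0) + \xi(x-x_0)$$
for every $x \in X$ and $y \in U$, so $f_n(x) \geq f(x_0)+\xi(x-x_0) > -\infty$. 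This is the only non-routine ingredient, and it is exactly the same tangent-hyperplane trick that drives (iv).
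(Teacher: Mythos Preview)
Your proof is correct. For (i)--(iii) the paper simply cites standard facts about infimal convolution, so your explicit verifications are exactly what one would write if one expanded those references.

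For (iv) your route differs from the paper's. The paper defines $h_x(y)=f(y)+n\|x-y\|-f(x)$, observes that $h_x$ is convex on $U$ and has a local minimum at $x$ (because $f$ is $n$-Lipschitz near $x$, so $h_x(y)\geq 0$ on that neighborhood), and then uses ``local minimum of a convex function is global'' to conclude $h_x\geq 0$ on all of $U$. You instead pick a subgradient $\xi\in\partial f(x)$, note that the local $n$-Lipschitz condition forces $\|\xi\|_{X^*}\leq n$, and use the global subgradient inequality on $U$. Both arguments are short and correct; the paper's avoids invoking existence of subgradients (a Hahn--Banach step), while yours makes the mechanism slightly more quantitative via the constant $n-\|\xi\|\geq 0$.

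Your final paragraph on finiteness of $f_n$ is a genuine point that the paper does not address: as stated, Claim~\ref{primer claim}(ii)--(iii) tacitly assume $f_n>-\infty$, which can fail when $E_n=\emptyset$ (e.g.\ $f(t)=-2t$ on $\R$ gives $f_1\equiv -\infty$). Your fix---bounding $f_n$ below by an affine function as soon as $E_n\neq\emptyset$---is exactly right, and in the overall proof of Theorem~\ref{segundo teorema} one may simply start the construction at the first index $n_0$ with $E_{n_0}\neq\emptyset$.
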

The first three statements are well-known facts about infimal convolution on Banach spaces, see \cite{Stromberg} for a survey paper on these topics. In order to prove (iv), we only need to check that $f\leq f_n$ on $E_n.$ Let $x$ be a point of $E_n$ and let $U_x$ be an open subset of $U$ containing $x$ for which $f$ is $n$-Lipschitz on $U_x.$ Then the function $h_x: U \rightarrow \R$ given by $h_x(y)=f(y)+n||x-y||-f(x)$ for all $y\in U,$ has a local minimum at the point $x,$ where $h_x(x)=0.$ Since $h_x$ is clearly convex, this local minimum is in fact a global one, and therefore
$
f(x)\leq f(y)+n\|x-y\| \textrm{ for all } y\in U.
$
This proves (iv) of the Claim.

\bigskip

Since $f_n$ is Lipschitz, by assumption, for each $n\in\N$ we can find a function $h_n$ of class $C^m(X)$ and convex such that
\begin{equation}
f_n - \sum_{j=0}^{n-1} \frac{\varepsilon}{2^j} \leq h_n \leq f_n - \sum_{j=0}^{n-2} \frac{\varepsilon}{2^j}-\frac{\varepsilon}{2^n} \quad \text{on} \: X.
\end{equation}
Note that by Claim \ref{primer claim}, the last inequalities imply that
\begin{equation}\label{eq2}
 f- \sum_{j=0}^{n-1} \frac{\varepsilon}{2^j} \leq h_n \quad \text{on} \: E_n, \quad
h_n \leq f - \sum_{j=0}^{n-2} \frac{\varepsilon}{2^j}-\frac{\varepsilon}{2^n} \quad \text{on} \: U.
\end{equation}
Now, using the smooth maxima, we define a sequence $\lbrace g_n \rbrace_{n}$ of functions inductively setting $g_1=h_1$ and $g_n=M_{\varepsilon/10^n}(g_{n-1},h_n),$ for all $ n \geq 2$. According to the preceding Proposition, we have that $g_n$ is convex and of class $C^m$ on $X.$ We also know that
\begin{equation}\label{eq3}
\max\lbrace g_{n-1},h_n \rbrace \leq g_n \leq \max \lbrace g_{n-1},h_n\rbrace + \frac{\varepsilon}{10^n} \quad \text{on} \: X
\end{equation}
and $g_n(x)=\max \lbrace g_{n-1}(x),h_n(x) \rbrace$ at those points $x\in X$ for which $|g_{n-1}(x)-h_n(x)| \geq \varepsilon/10^n.$ The sequence $\lbrace g_n \rbrace_n$ satisfies the following properties:
\begin{claim}
For every $n \geq 2,$ we have
\begin{itemize} \label{segundo claim}
\item[(i)] $g_n=g_{n-1}$ on $E_{n-1}.$
\item[(ii)] $f-\varepsilon-\frac{\varepsilon}{2}- \cdots-\frac{\varepsilon}{2^{n-1}} \leq g_n$ on $E_n.$
\item[(iii)] $g_n \leq f-\frac{\varepsilon}{2}+\frac{\varepsilon}{10^2}+\cdots+\frac{\varepsilon}{10^n}$ on $U.$
\end{itemize}
\end{claim}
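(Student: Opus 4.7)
The plan is to prove Claim~\ref{segundo claim} by induction on $n$, treating $n=1$ as a base case and deducing all three statements at level $n\ge 2$ from (ii) at level $n-1$, together with the inequalities \eqref{eq2} and the properties of smooth maxima from Lemma~\ref{smooth maxima} and Proposition~\ref{properties of M(f,g)}.

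For the base case ($n=1$), recall that $g_1=h_1$ and that \eqref{eq2} with $n=1$ (where the sum $\sum_{j=0}^{-1}$ is empty) reads $f-\varepsilon\le h_1$ on $E_1$ and $h_1\le f-\varepsilon/2$ on $U$; these are the $n=1$ analogues of (ii) and (iii), which is all that is needed to start the induction.

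For the inductive step ($n\ge 2$), assume (i)--(iii) hold at level $n-1$. The proof of (ii) is the easiest: by \eqref{eq2}, $h_n\ge f-\varepsilon-\varepsilon/2-\cdots-\varepsilon/2^{n-1}$ on $E_n$, and since $g_n\ge h_n$ on $X$ by \eqref{eq3}, the statement follows at once. For (iii), observe that \eqref{eq2} gives $h_n\le f-\varepsilon-\cdots-\varepsilon/2^{n-2}-\varepsilon/2^n\le f-\varepsilon/2$ on $U$, while by the inductive hypothesis $g_{n-1}\le f-\varepsilon/2+\varepsilon/10^2+\cdots+\varepsilon/10^{n-1}$ on $U$; hence $\max\{g_{n-1},h_n\}$ is bounded above by the right-hand side of the inductive bound, and applying \eqref{eq3} adds exactly the term $\varepsilon/10^n$.

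The most delicate point is (i), and it is the place where the geometric decay rates $2^{-n}$ and $10^{-n}$ must be balanced. On $E_{n-1}$, the inductive hypothesis (ii) at level $n-1$ gives $g_{n-1}\ge f-\varepsilon-\cdots-\varepsilon/2^{n-2}$, while \eqref{eq2} gives $h_n\le f-\varepsilon-\cdots-\varepsilon/2^{n-2}-\varepsilon/2^n$ on all of $U$. Subtracting, we obtain the crucial estimate
\begin{equation*}
g_{n-1}-h_n\ge \frac{\varepsilon}{2^n}>\frac{\varepsilon}{10^n}\quad\text{on }E_{n-1}.
\end{equation*}
By property~(3) of the smooth maximum in Lemma~\ref{smooth maxima} (equivalently item~(3) of Proposition~\ref{properties of M(f,g)} with the convex pair $(g_{n-1},h_n)$), this forces $g_n=M_{\varepsilon/10^n}(g_{n-1},h_n)=g_{n-1}$ at every point of $E_{n-1}$. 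The main obstacle in the whole argument is precisely this numerical balance: one has to be sure that the smoothing scale $\varepsilon/10^n$ chosen in the definition of $g_n$ is strictly smaller than the "protection margin" $\varepsilon/2^n$ between $g_{n-1}$ and $h_n$ on $E_{n-1}$, and this in turn requires (ii) at level $n-1$. Thus statements (i), (ii), and (iii) must be carried through the induction together, with (ii) feeding back into (i) at the next stage.
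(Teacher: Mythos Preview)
Your proof is correct and follows essentially the same route as the paper: (ii) is immediate from \eqref{eq2} and \eqref{eq3}, (iii) is proved by induction exactly as in the paper, and your proof of (i) obtains the key lower bound $g_{n-1}\ge f-\sum_{j=0}^{n-2}\varepsilon/2^j$ on $E_{n-1}$ via the inductive hypothesis (ii) at level $n-1$, whereas the paper gets the same bound directly from $g_{n-1}\ge h_{n-1}$ and \eqref{eq2} for $h_{n-1}$; the two arguments are equivalent.
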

Property $(ii)$ is an obvious consequence of inequalities (\ref{eq2}) and (\ref{eq3}). Property $(i)$ can be proved as follows: given $x\in E_{n-1},$ the inequalities of (\ref{eq2}), together with the fact that $f=f_n=f_{n-1}$ on $E_{n-1}$, show that
$$
g_{n-1}(x) \geq h_{n-1}(x) \geq f_n(x)-\sum_{j=0}^{n-2} \frac{\varepsilon}{2^j} \geq h_n(x) +\frac{\varepsilon}{2^n} \geq h_n(x)+\frac{\varepsilon}{10^n},
$$
and this implies that $g_n(x)=g_{n-1}(x).$ We next show $(iii)$ by induction. In the case $n=2,$ our functions satisfy
$$
f_1-\varepsilon \leq h_1=g_1 \leq f_1-\frac{\varepsilon}{2}, \quad f_2-\varepsilon-\frac{\varepsilon}{2} \leq h_2 \leq f_2-\varepsilon-\frac{\varepsilon}{4}
$$
and $g_2=M_{\varepsilon/10^2}(g_1,h_2)$ on $X.$ It is enough to consider the following chain of inequalities: for all $x\in U$,
\begin{align*}
g_2(x) & \leq \max \lbrace h_2(x),g_1(x) \rbrace + \frac{\varepsilon}{10^2} \leq \max \lbrace f_2(x)-\varepsilon-\frac{\varepsilon}{4}, f_1(x)-\frac{\varepsilon}{2} \rbrace + \frac{\varepsilon}{10^2} \\
& \leq \max \lbrace f(x)-\varepsilon-\frac{\varepsilon}{4}, f(x)-\frac{\varepsilon}{2} \rbrace + \frac{\varepsilon}{10^2} \leq f(x)-\frac{\varepsilon}{2}+\frac{\varepsilon}{10^2}.
\end{align*}
Now we assume that for an integer $n \geq 2$ we have (iii), and we check that the same holds for $n+1.$ Let $x\in U.$ By combining the following two inequalities
$$
g_n(x) \leq f(x)-\frac{\varepsilon}{2}+\frac{\varepsilon}{10^2} +\cdots+\frac{\varepsilon}{10^n},
$$
$$
h_{n+1}(x) \leq f(x)-\varepsilon -\frac{\varepsilon}{2}-\frac{\varepsilon}{2^2}-\cdots-\frac{\varepsilon}{2^{n-1}} -\frac{\varepsilon}{2^{n+1}}
$$
(the first one is part of our induction hypothesis, and the second one is (\ref{eq2}) with $n+1$ in place of $n$),
and using that
$$
g_{n+1}=M_{\varepsilon/10^{n+1}}(g_n,h_{n+1}) \leq \max\lbrace h_{n+1},g_n \rbrace+\frac{\varepsilon}{10^{n+1}},
$$
we obtain
$$
g_{n+1}(x) \leq f(x)-\frac{\varepsilon}{2}+\frac{\varepsilon}{10^2}+\cdots+\frac{\varepsilon}{10^{n+1}}.
$$ This completes the proof of Claim \ref{segundo claim}.

\bigskip

Finally, we define
$$
g(x)=\lim_{n \to \infty} g_n(x) ,\quad \text{for all} \quad x\in U.
$$
Since we have that $g_{n+k}=g_n$ on each $E_n$ for all $k \geq 1,$ it is clear that $g$ is well defined and $g=g_n$ on $E_n$ for all $n$. And because each $E_n$ is an open subset of $U,$ this shows that $g\in C^m(U)$. Moreover the function $g$, being a limit of convex functions, is convex as well. To complete the proof of Theorem \ref{segundo teorema} let us see that $g$ is $2\varepsilon$-close to $f.$ Indeed, let $x\in U$ and take an integer $n\geq 2$ for which $x\in E_n.$ Using Claim \ref{segundo claim} and the preceding remarks about $g,$ we obtain
\begin{align*}
f(x)-2\varepsilon & \leq f(x)-\sum_{j=0}^{n-1} \frac{\varepsilon}{2^{j}} \leq g_n(x) = g(x) \\
& \leq f(x)-\frac{\varepsilon}{2}+\sum_{j=2}^{n} \frac{\varepsilon}{10^j} \leq f(x).
\end{align*}
Hence $f-2\varepsilon \leq g \leq f.$
\end{proof}

\begin{proof}[Proof of Theorem \ref{primer teorema}] Theorem \ref{primer teorema} actually is a corollary of \ref{segundo teorema}, because a Banach space $X$ whose dual $X^*$ is LUR has the approximation property mentioned in the hypotheses of Theorem \ref{segundo teorema}. This can be shown by using the infimal convolutions
$$
f_{\lambda}(x)=\inf_{y\in X}\{ f(y)+\frac{1}{2\lambda} \|x-y\|^{2}\},
$$
where $\|\cdot\|$ is an equivalent norm in $X$ whose dual norm is LUR. It is well known that if $f$ is convex and Lipschitz then $f_{\lambda}$ is $C^{1}$ smooth and convex, and converges to $f$ uniformly on $X$, as $\lambda\to 0^{+}$. For the smoothness part of this assertion, see \cite[Proposition 2.3]{van}. On the other hand, we next offer a proof of the fact that if $f$ is Lipschitz then $f_{\lambda}$ converges to $f$ uniformly on $X$ as $\lambda\to 0^{+}$. Observe first that in this case the infimum defining $f_{\lambda}(x)$ can be restricted to the ball $B\left(x, 2\lambda\textrm{Lip}(f)\right)$; indeed, if $d(x,y)>2\lambda\textrm{Lip}(f)$ then we have
$$f(y)+\frac{1}{2\lambda}d(x,y)^{2}\geq f(x)-\textrm{Lip}(f) d(x,y)+\frac{1}{2\lambda}d(x,y)^{2}\geq
f(x)\geq f_{\lambda}(x).$$ Now, one has
\begin{eqnarray*}
& & 0\leq f(x)-f_{\lambda}(x)= f(x)-\inf_{y\in B(x, 2\lambda\textrm{Lip}(f))}\{f(y)+\frac{1}{2\lambda}
d(x, y)^{2}\}\\
& & \leq\sup _{y\in B(x, 2\lambda\textrm{Lip}(f))}\{|f(x)-f(y)|+\frac{1}{2\lambda}
d(x, y)^{2}\} \\
& &\leq \textrm{Lip}(f) \left(2\lambda\textrm{Lip}(f)\right) +\frac{\left( 2\lambda\textrm{Lip}(f)\right)^{2}}{2\lambda},
\end{eqnarray*}
and the last term converges to $0$ as $\lambda\to 0^{+}$, so the assertion is proved.
\end{proof}

\begin{proof}[Proof of Corollary \ref{corolario}]
$(i)\implies (ii)$: If $X^*$ is separable, it is well known (see \cite[Theorem 8.6]{FabianEtAl} for instance) that there is an equivalent norm in $X$ whose dual norm is LUR on $X^*$, and therefore by using Theorem \ref{primer teorema} we obtain (ii).
\newline
$(ii) \implies (i)$: Take a convex function $\varphi \in C^1(X)$ such that
$
\|x\| -\frac{1}{4} \leq \varphi(x)\leq \|x\| \quad \text{for all} \quad x\in X.
$
It is easy to construct a function $h\in C^1(\R)$ such that $h(x)=1$ for all $x\leq 0$ and $h(x)=0$ for all $x \geq 3/4.$ Now, if we define the function $\psi:= h \circ \varphi$ it is obvious that $\psi $ is of class $C^1(X)$ with $\psi(0)=1.$ We also note that if $\|x\| \geq 1,$ then $\varphi(x) \geq 3/4$ and this implies that $\psi(x)=0.$ This shows that $\psi$ is a bump function of class $C^{1}(X)$. Because $X$ is separable, and according to \cite[Theorem 8.6]{FabianEtAl}, the dual space $X^*$ is separable too.
\end{proof}

\end{document}